\title{The unconditional case of the complex $S$-inequality}
\author{Piotr Nayar \thanks{Research partially supported by NCN Grant no. 2011/01/N/ST1/01839.}, Tomasz Tkocz \thanks{Research partially supported by NCN Grant no. 2011/01/N/ST1/05960.} }
\date{}
\newtheorem{thm}{Theorem}
\newtheorem{lm}{Lemma}
\newtheorem{prop}{Proposition}
\newtheorem{cor}{Corollary}
\newtheorem*{conj*}{Conjecture}
\theoremstyle{definition}
\newtheorem{remark}{Remark}
\newcommand{\C}{\mathbb{C}}
\newcommand{\R}{\mathbb{R}}
\newcommand{\fun}[3]{#1\colon #2 \longrightarrow #3}
\newcommand{\dd}{\mathrm{d}}
\newcommand{\1}{\textbf{1}}
\newcommand{\mf}[1]{\mathfrak{#1}}
\DeclareMathOperator{\Ent}{Ent}
\begin{document}

\maketitle

\begin{abstract}
In this note we prove the complex counterpart of the S-inequality for complete Reinhardt sets. In particular, this result implies that the complex S-inequality holds for unconditional convex sets.
\end{abstract}

\noindent {\bf 2010 Mathematics Subject Classification.} Primary 60G15; Secondary 60E15.

\noindent {\bf Key words and phrases.} S-inequality, Gaussian measure, Complete Reinhardt set, Unconditional complex norm, Entropy.

\section{Introduction}\label{sec.intro}

Studying various aspects of a Gaussian measure in a Banach space one often needs precise estimates on measures of balls and their dilations. This gives raise to the question how the function $(0, \infty) \ni t \mapsto \mu(tB)$ behaves. Here $B$ is a convex and symmetric subset of some Banach space, i.e. an unit ball with respect to some norm, and $\mu$ is a Gaussian measure. Thanks to certain approximation arguments we may only deal with the simplest spaces, namely $\R^n$ or $\C^n$. In the former case the issue is well understood due to R. Lata\l a and K. Oleszkiewicz. Denote by $\gamma_n$ the standard Gaussian measure on $\R^n$, i.e. the measure with the density at a point $(x_1, \ldots, x_n)$ equal to $\frac{1}{\sqrt{2\pi}^{n}}\exp\left( -x_1^2/2 - \ldots - x_n^2/2 \right)$. In \cite{LO} it is shown that for a symmetric convex body $K \subset \R^n$ and the strip $P = \{x \in \R^n \ | \ |x_1| \leq p\}$, where $p$ is chosen so that $\gamma_n(K) = \gamma_n(P)$, we have
\[
	\gamma_n(tK) \geq \gamma_n(tP), \qquad t \geq 1.
\]
This result is called \emph{S-inequality}. The interested reader is also referred to a concise survey \cite{Lat}.

In the present note we would like to focus on S-inequality for sets which correspond to unit balls with respect to unconditional norms on $\C^n$. Some partial results concerning general case has been recently obtained in \cite{Tko}.

Definitions and preliminary statements are provided in Section \ref{sec.pre}. Section \ref{sec.res} is devoted to the main result. It also contains a proof of a one-dimensional inequality, which bounds entropy, and seems to be the heart of the proof of our main theorem.

\section{Preliminaries}\label{sec.pre}

We define the standard Gaussian measure $\nu_n$ on the space $\C^n$ via the formula
\[
	\nu_n(A) = \gamma_{2n}\left( \tau (A) \right), \qquad \textrm{for any Borel set $A \subset \C^n$},
\]
where $\C^n \overset{\tau}{\longmapsto} \R^{2n}$ is the bijection given by 
\[
	\tau(z_1, \ldots, z_n) = (\mathfrak{Re} z_1, \mathfrak{Im} z_1, \ldots, \mathfrak{Re} z_n, \mathfrak{Im} z_n).   
\]
We adopt the notation $\R_+ = [0, +\infty)$. Later on we will also extensively use the notion of the \emph{entropy} of a function $\fun{h}{X}{\R_+}$ with respect to a probability measure $\mu$ on a measurable space $X$
\begin{equation}\label{eq.entdef}
   \Ent_\mu f = \int_X f(x)\ln f(x) \dd \mu(x) - \left( \int_X f(x)\dd \mu(x) \right)\ln\left( \int_X f(x)\dd \mu(x) \right).
\end{equation}

We say that a closed subset of $\C^n$ \emph{supports the complex $S$-inequality}, \emph{$S\C$-inequality} for short, if for any \emph{cylinder} $C = \{z \in \C^n \ | \ |z_1| \leq R\}$ we have
\begin{equation}\label{eq.defscineq}
   \nu_n(K) = \nu_n(C) \quad \Longrightarrow \quad \nu_n(tK) \geq \nu_n(tC), \quad \text{for $t \geq 1$}.
\end{equation}
Note that the natural counterpart of $S$-inequality in the complex case is the following conjecture due to Prof. A. Pe\l czy\' nski, which  has already been discussed in \cite{Tko}.
\begin{conj*}\label{conjecture}
   All closed subsets $K$ of $\C^n$ which are \emph{rotationally symmetric}, that is $e^{i\theta}K = K$ for any $\theta \in \R$, support $S\C$-inequality.
\end{conj*}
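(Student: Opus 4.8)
\section*{Strategy for the conjecture}

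The plan is to linearise the problem at $t=1$ and then propagate the resulting differential inequality up to all $t\ge1$. Writing $\|z\|^2=\sum_{j}|z_j|^2$ and using $\nu_n(tA)=\frac{t^{2n}}{(2\pi)^n}\int_{\tau(A)}e^{-t^2|y|^2/2}\,\dd y$, differentiation under the integral gives, for any closed $A\subset\C^n$,
\begin{equation}\label{eq.deriv}
   \frac{\dd}{\dd t}\Big|_{t=1}\nu_n(tA)=2n\,\nu_n(A)-\int_A\|z\|^2\,\dd\nu_n(z).
\end{equation}
For a cylinder one has the explicit profile $\nu_n(tC)=1-e^{-R^2t^2/2}$. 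Hence, if at the common value $\nu_n(K)=\nu_n(C)$ we can prove the \emph{infinitesimal} inequality
\begin{equation}\label{eq.inf}
   \int_K\|z\|^2\,\dd\nu_n\le\int_C\|z\|^2\,\dd\nu_n,
\end{equation}
that is, that the cylinder \emph{maximises} the Gaussian second moment among admissible sets of a given measure, then applying \eqref{eq.deriv} to the dilate $tK$ (again rotationally symmetric) and setting $\rho(t)^2=-2\ln\bigl(1-\nu_n(tK)\bigr)$ for the ``equivalent radius'' turns \eqref{eq.inf} into $t\rho'(t)\ge\rho(t)$. Since $\rho(1)=R$, this makes $\rho(t)/t$ nondecreasing, so $\rho(t)\ge Rt$ and therefore $\nu_n(tK)\ge\nu_n(tC)$ for all $t\ge1$. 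Thus the entire $S\C$-inequality is equivalent to the single scale-free statement \eqref{eq.inf}.

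Before attacking \eqref{eq.inf} one must narrow the class of sets, since the conjecture as worded fails for unrestricted rotationally symmetric closed sets. Already in $\C^1$ the set $K=\{|z_1|\ge a\}$ is closed and rotationally symmetric and matches a disk $C$ of radius $R$ in measure when $e^{-a^2/2}=1-e^{-R^2/2}$, yet $\nu_1(tK)=e^{-t^2a^2/2}\to0$ while $\nu_1(tC)\to1$, grossly violating \eqref{eq.defscineq}. The obstruction is that $K$ is not star-shaped about the origin, so dilation \emph{decreases} its measure. The correct hypothesis, and the one under which the complete Reinhardt theorem of this note lives, is that $K$ be star-shaped (equivalently $sK\subset K$ for $0\le s\le1$); I shall assume this throughout, noting that it is automatic for the unconditional convex sets that motivate the conjecture.

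With star-shapedness in force, the proposed route to \eqref{eq.inf} is by symmetrisation towards a shape for which the answer is already known. Concretely, I would seek a one-parameter family of rearrangements $R_s$, $s\in[0,1]$, acting on rotationally symmetric star-shaped sets, such that (i) $R_0=\mathrm{id}$ and $R_1(K)$ is a \emph{complete Reinhardt} set, (ii) $\nu_n(R_s(K))$ is constant in $s$, (iii) $s\mapsto\int_{R_s(K)}\|z\|^2\,\dd\nu_n$ is nondecreasing, and (iv) rotational symmetry and star-shapedness are preserved. Granting such a family, the chain $\int_K\|z\|^2\,\dd\nu_n\le\int_{R_1(K)}\|z\|^2\,\dd\nu_n\le\int_C\|z\|^2\,\dd\nu_n$ yields \eqref{eq.inf} for $K$, the last step being precisely \eqref{eq.inf} for complete Reinhardt sets, which is equivalent (through the same reduction \eqref{eq.deriv}) to the theorem proved in this note. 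A natural candidate for $R_s$ is a Gaussian symmetrisation carried out successively in the coordinate planes, interpolating the generic diagonal-rotation symmetry towards the full coordinatewise Reinhardt symmetry.

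The main obstacle is exactly the construction and analysis of such a rearrangement, and this is where diagonal circle symmetry, far weaker than the coordinatewise torus symmetry of Reinhardt sets, makes the problem genuinely $2n$-dimensional. The entropy/tensorisation mechanism that drives the Reinhardt case is unavailable, because a merely rotationally symmetric set carries no product structure to factor over the coordinates. Moreover, the classical Gaussian symmetrisations are tailored to control measure and perimeter, not the second-moment functional of \eqref{eq.inf}; one must therefore either build a bespoke rearrangement that monotonises this functional at fixed measure, or bypass symmetrisation by a direct variational analysis of the constrained maximisation of $\int_K\|z\|^2\,\dd\nu_n$. The latter is delicate because the \emph{unconstrained} bathtub maximiser is the non-star-shaped complement of a ball, so the star-shapedness constraint must be shown to be active and to pin the optimiser to the cylinder. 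Establishing either alternative is, I expect, the crux on which the full conjecture turns.
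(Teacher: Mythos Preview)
The statement you are attempting is the paper's \emph{Conjecture}, not a theorem: the paper does not prove it and offers no argument for it beyond the complete Reinhardt subclass (Theorem~\ref{thm.main}). There is therefore no proof in the paper to compare against. Your reduction to the infinitesimal second-moment inequality is correct and is exactly the content of Propositions~\ref{prop.derivative} and~\ref{prop.entopy}; your monotonicity argument for $\rho(t)/t$ is the standard Kwapie\'n--Sawa device underlying Proposition~\ref{prop.derivative}. You also correctly observe that the conjecture is false as literally worded: the set $\{|z_1|\ge a\}\subset\C$ is closed and rotationally symmetric yet violates \eqref{eq.defscineq}. The intended hypothesis is convexity (cf.\ \cite{Tko}), and your star-shapedness repair is the natural minimal fix.

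That said, what you have written is a programme, not a proof. The entire weight rests on the existence of a one-parameter family of rearrangements $R_s$ that preserves Gaussian measure, preserves rotational symmetry and star-shapedness, increases $\int_K|z|^2\,\dd\nu_n$, and terminates in a complete Reinhardt set. You neither construct such a family nor give any reason it should exist, and you yourself flag that the standard Gaussian symmetrisations do not control the second-moment functional in the required direction. The alternative route you mention, a direct variational analysis pinning the constrained maximiser to the cylinder, is likewise only named, not executed. In short, the genuine gap is the one you identify: the conjecture (under convexity) remains open, and your proposal does not close it.
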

In the present paper we are interested in the class $\mf{R}$ of all closed sets in $\C^n$ which are \emph{Reinhardt complete}, i.e. along with each point $(z_1, \ldots, z_n)$ such a set contains all points $(w_1, \ldots, w_n)$ for which $|w_k| \leq |z_k|$, $k = 1, \ldots, n$ (consult for instance the textbook \cite[I.1.2, pp. 8--9]{Sh}). The key point is that this class contains all unit balls with respect to unconditional norms on $\C^n$. Recall that a norm $\|\cdot\|$ is said to be \emph{unconditional} if $\|(e^{i\theta_1}z_1, \ldots, e^{i\theta_n} z_n)\| = \|z\|$ for all $z \in \C^n$ and $\theta_1, \ldots, \theta_n \in \R$. 

The goal is to prove that all sets from the class $\mf{R}$ support $S\C$-inequality. Now we establish some general yet simple observations which allows us to reduce the problem to a one-dimensional entropy inequality.
\begin{prop}\label{prop.derivative}
   A closed subset $K$ of $\C^n$ supports $S\C$-inequality if for any cylinder $C$ we have
   \begin{equation}\label{eq.derivative}
   \nu_n(K) = \nu_n(C) \quad \Longrightarrow \quad \frac{\dd}{\dd t}\nu_n(tK)\bigg|_{t=1} \geq \frac{\dd}{\dd t}\nu_n(tC)\bigg|_{t=1}.
	\end{equation}
\end{prop}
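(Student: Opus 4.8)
The plan is to reduce the monotonicity statement \eqref{eq.defscineq} to the derivative condition \eqref{eq.derivative} by a scaling argument. First I would introduce the function $f_K(t) = \nu_n(tK)$ and similarly $f_C(t) = \nu_n(tC)$ for $t > 0$, and observe that since $C$ is itself a cylinder, the dilate $sC = \{z : |z_1| \leq sR\}$ is again a cylinder for every $s > 0$. This is the crucial feature: the family of cylinders is closed under dilation. The set $K$ need not have any such invariance, so $sK$ is just some other closed set.

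Next I would fix $s \geq 1$ and compare $sK$ with a cylinder. The idea is to choose, for each fixed $s$, the radius so that $sC$ is exactly the cylinder with the same Gaussian measure as $sK$; equivalently, one checks that the cylinder matched to $sK$ is precisely $sC$, using that $C$ was matched to $K$. Actually the cleanest route is: suppose for contradiction that $\nu_n(t_0 K) < \nu_n(t_0 C)$ for some $t_0 > 1$. Consider the auxiliary function $g(t) = \nu_n(tK) - \nu_n(t\widetilde{C})$ where $\widetilde{C}$ is chosen so that $g(t_0) = 0$ but $g'(t_0) < 0$, forcing, by the hypothesis applied at the right scale, a contradiction. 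More directly: apply the hypothesis \eqref{eq.derivative} not at $t=1$ but at an arbitrary base point $s \geq 1$. Replacing $K$ by $sK$ and $C$ by $sC$, note $\nu_n(sK)=\nu_n(sC)$ fails in general — so instead one defines, for each $s$, the cylinder $C_s$ with $\nu_n(C_s) = \nu_n(sK)$, writes $C_s = r(s) C$ for the appropriate scalar $r(s) \geq 1$ (this needs $s \geq 1$ and monotonicity of $R \mapsto \nu_1(\{|z_1|\leq R\})$), and applies \eqref{eq.derivative} to the pair $(sK, C_s)$ after rescaling by $1/s$. This yields $\frac{\dd}{\dd t}\nu_n(tK)|_{t=s} \geq \frac{\dd}{\dd t}\nu_n(tC_s)|_{t=s}$, and the right-hand side equals $\frac{\dd}{\dd t}\nu_n(tC)|_{t=s}$ once one tracks that $C_s = r(s) C$ with $r(s) = s$ when $\nu_n(sK) = \nu_n(sC)$, which holds on the set where $g$ vanishes.

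The clean way to package all of this: let $t_* = \sup\{t \geq 1 : \nu_n(uK) \geq \nu_n(uC) \text{ for all } u \in [1,t]\}$. By continuity $t_* \geq 1$ and $\nu_n(t_* K) \geq \nu_n(t_* C)$; if $t_* < \infty$ then in fact $\nu_n(t_* K) = \nu_n(t_* C)$ (otherwise the inequality would persist past $t_*$). Now apply the hypothesis \eqref{eq.derivative} with $K$ replaced by $t_* K$ and $C$ replaced by $t_* C$ — legitimate since $t_* C$ is a cylinder and the two have equal measure — to get $\frac{\dd}{\dd t}\nu_n(t\, t_*K)|_{t=1} \geq \frac{\dd}{\dd t}\nu_n(t\, t_*C)|_{t=1}$, i.e. $\frac{\dd}{\dd t}\nu_n(sK)|_{s=t_*} \geq \frac{\dd}{\dd t}\nu_n(sC)|_{s=t_*}$ by the chain rule. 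Hence $\nu_n(sK) - \nu_n(sC)$ has nonnegative derivative at $t_*$ while vanishing there, so it stays $\geq 0$ on a right neighborhood of $t_*$, contradicting maximality unless $t_* = \infty$. Therefore $\nu_n(tK) \geq \nu_n(tC)$ for all $t \geq 1$.

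The main obstacle, and the only genuinely non-formal point, is justifying that $t \mapsto \nu_n(tK)$ is differentiable (or at least that the relevant one-sided derivatives behave well) so that the supremum argument with "nonnegative derivative at a zero implies locally nonnegative" is valid — one should note $\nu_n(tK)$ is nondecreasing in $t$ (as $K \supseteq \{0\}$ may fail, but $tK$ grows with $t$ for star-shaped... here $K$ closed, not necessarily star-shaped, so even monotonicity of $t \mapsto \nu_n(tK)$ is not automatic). The honest fix is that the argument only uses the hypothesis at equality points and the elementary fact that if a continuous function vanishes at $t_*$ with right-derivative $\geq 0$ there, it cannot be strictly negative immediately to the right; combined with the definition of $t_*$ as a supremum this closes the gap. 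A secondary routine point is checking that among cylinders the measure is a strictly increasing continuous function of the radius, so the matching cylinder $C$ in \eqref{eq.defscineq} exists, is unique, and scales correctly — but this is immediate from the definition of $\nu_n$ via $\gamma_{2n}$.
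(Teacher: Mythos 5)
The paper does not actually prove this proposition; it defers to Lemma 1 of \cite{KS}. Your overall strategy---exploit that cylinders are closed under dilation and apply the derivative hypothesis at rescaled copies of $K$---is the right one and is indeed the idea behind that citation, but your final ``clean packaging'' contains a genuine logical gap. From $g(t_*)=0$ and $g'(t_*)\ge 0$, where $g(t)=\nu_n(tK)-\nu_n(tC)$, you conclude that $g$ ``stays $\ge 0$ on a right neighborhood of $t_*$.'' That implication is false: $g(t)=-(t-t_*)^2$ vanishes at $t_*$, has derivative $0\ge 0$ there, and is strictly negative immediately to the right. (The ``elementary fact'' you invoke at the end is only true when the derivative is strictly positive.) The root of the problem is that your argument extracts information from the hypothesis only at scales $s$ where $\nu_n(sK)=\nu_n(sC)$, i.e.\ on the zero set of $g$, and a sign condition on $g'$ restricted to the zero set of $g$ does not control $g$ off that set. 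By contrast, your worry about differentiability is a non-issue: the computation displayed right after the proposition shows that $t\mapsto\nu_n(tA)$ is differentiable for every Borel set $A$, by differentiation under the integral sign.

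The correct route---which your middle paragraph almost reaches before abandoning it---is to use the hypothesis at \emph{every} scale $s$, comparing $sK$ with its \emph{own} matching cylinder rather than with $sC$. Write $\psi(R)=\nu_n(\{|z_1|\le R\})=1-e^{-R^2/2}$ and let $\rho(s)=\psi^{-1}(\nu_n(sK))$ be the radius of the cylinder having the same measure as $sK$. Applying \eqref{eq.derivative} to the pair $\left(sK,\ \{|z_1|\le\rho(s)\}\right)$ and the chain rule gives $s\,\psi'(\rho(s))\,\rho'(s)\ \ge\ \rho(s)\,\psi'(\rho(s))$, hence $s\rho'(s)\ge\rho(s)$, i.e.\ $\frac{\dd}{\dd s}\left(\rho(s)/s\right)\ge 0$ for all $s$ (the cases $\nu_n(sK)\in\{0,1\}$ being degenerate). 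Thus $\rho(s)/s$ is nondecreasing, and $\rho(t)/t\ge\rho(1)$ for $t\ge 1$ is exactly \eqref{eq.defscineq} after applying the increasing function $\psi$. Note that this argument (like yours) needs the derivative hypothesis for all dilates $sK$, not only for $K$ at $t=1$ as the proposition literally states; your parenthetical ``legitimate since $t_*C$ is a cylinder'' justifies the choice of cylinder but not the application of the hypothesis to the set $t_*K$. The strengthening is clearly the intended reading and is what the proof of Theorem \ref{thm.main} supplies, since the class $\mf{R}$ is closed under dilation, but it should be stated.
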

The proof is essentially given in \cite[Lemma 1]{KS}, so we skip it. For any closed set $A$ the derivative of the function $t \mapsto \nu_n(tA)$ is easy to compute. Indeed,
\begin{align*}
      \frac{\dd}{\dd t}\nu_n(tA)\bigg|_{t=1} &= \frac{\dd}{\dd t} \int_{tA}e^{-|z|^2/2} \dd z \bigg|_{t=1} = \frac{\dd}{\dd t} \int_{A} t^{2n}e^{-t^2|w|^2/2} \dd w \bigg|_{t=1} \\
      &= 2n\nu_n(A) - \int_A |z|^2 \dd \nu_n(z).
\end{align*}
Moreover, the integral of $|z|^2$ over a cylinder $C$ may be expressed explicitly in terms of the measure $\nu_n(C)$. Namely,
\[
	\int_C |z|^2 \dd \nu_n(z) = 2(1- \nu_n(C))\ln\left( 1 - \nu_n(C) \right) + 2n\nu_n(C).
\]
Combining these two remarks with the preceding proposition we obtain an equivalent formulation of the problem..
\begin{prop}\label{prop.entopy}
   A closed subset $K$ of $\C^n$ supports $S\C$-inequality if and only if
   \begin{equation}\label{eq.entropy}
      \int_K |z|^2 \dd \nu_n(z) \leq 2n\nu_n(K) + 2(1 - \nu_n(K))\ln\left( 1 - \nu_n(K) \right).
   \end{equation}
\end{prop}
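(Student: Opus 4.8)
The plan is to check the two displayed computations preceding the statement and then observe that \eqref{eq.entropy} is nothing but a transcription of the derivative condition \eqref{eq.derivative} from Proposition \ref{prop.derivative}.

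First I would verify the identity $\int_C |z|^2 \dd\nu_n(z) = 2(1-\nu_n(C))\ln(1-\nu_n(C)) + 2n\nu_n(C)$ for a cylinder $C = \{z \in \C^n : |z_1| \le R\}$. Splitting $|z|^2 = \sum_{k=1}^n |z_k|^2$ and using that $\nu_n$ is a product measure, the coordinates $k \ge 2$ contribute $2(n-1)\nu_n(C)$ (because $\int_{\C}|z_k|^2\dd\nu_1 = 2$), while the first coordinate gives $\int_{\{|z_1|\le R\}}|z_1|^2\dd\nu_1(z_1)$, which after passing to polar coordinates and one integration by parts equals $2\nu_n(C) + 2(1-\nu_n(C))\ln(1-\nu_n(C))$; here one uses $\nu_n(C) = \nu_1(\{|z_1| \le R\}) = 1 - e^{-R^2/2}$, that is $R^2 = -2\ln(1-\nu_n(C))$. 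Adding the contributions yields the claimed formula, and combining it with the already computed derivative $\frac{\dd}{\dd t}\nu_n(tA)\big|_{t=1} = 2n\nu_n(A) - \int_A |z|^2\dd\nu_n$ gives $\frac{\dd}{\dd t}\nu_n(tC)\big|_{t=1} = -2(1-\nu_n(C))\ln(1-\nu_n(C))$ for every cylinder $C$.

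Now the equivalence follows. Since $R \mapsto \nu_n(\{|z_1|\le R\}) = 1-e^{-R^2/2}$ is a continuous increasing bijection of $[0,\infty)$ onto $[0,1)$, whenever $\nu_n(K) < 1$ there is a cylinder $C$ with $\nu_n(C) = \nu_n(K)$ (the case $\nu_n(K) = 1$ being trivial, both sides of \eqref{eq.entropy} equaling $2n$). For such a $C$ the two formulas above turn \eqref{eq.derivative} into precisely $\int_K |z|^2\dd\nu_n \le 2n\nu_n(K) + 2(1-\nu_n(K))\ln(1-\nu_n(K))$, i.e. \eqref{eq.entropy}. Hence if \eqref{eq.entropy} holds then \eqref{eq.derivative} holds and Proposition \ref{prop.derivative} shows that $K$ supports $S\C$-inequality; conversely, if $K$ supports $S\C$-inequality then $g(t) := \nu_n(tK) - \nu_n(tC) \ge 0$ on $[1,\infty)$ with $g(1)=0$, and since $g$ is differentiable --- $\nu_n(tA)$ being, up to a constant, $\int_A t^{2n}e^{-t^2|w|^2/2}\dd w$ --- we get $g'(1)\ge 0$, which unwinds to \eqref{eq.entropy}. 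I do not expect any real obstacle here: the only mildly technical points are the polar-coordinate evaluation of $\int_C|z|^2\dd\nu_n$, the existence of a measure-matching cylinder, and the routine differentiability of $t\mapsto\nu_n(tK)$ that legitimizes comparing one-sided derivatives at $t=1$.
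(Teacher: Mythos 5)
Your proposal is correct and follows exactly the route the paper intends: it combines the displayed derivative formula and the cylinder moment identity with Proposition \ref{prop.derivative}, which is all the paper itself does (the paper leaves the verification to the reader with the phrase ``combining these two remarks''). You additionally supply the easy necessity direction (differentiating $\nu_n(tK)-\nu_n(tC)\ge 0$ at $t=1$), which the paper's ``if and only if'' tacitly assumes, so nothing is missing.
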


\section{Main result}\label{sec.res}

We aim at proving the aforementioned main result, which reads as follows
\begin{thm}\label{thm.main}
   Any set from the class $\mf{R}$ supports $S\C$-inequality.
\end{thm}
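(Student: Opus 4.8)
The plan is to verify, for an arbitrary $K\in\mf R$, the entropy inequality \eqref{eq.entropy} of Proposition~\ref{prop.entopy}. First I would put it in a more convenient form. Since $\int_{\C^n}|z_i|^2\dd\nu_n=2$ for every $i$ (the coordinate $|z_i|^2$ has the $\chi_2^2$, i.e.\ exponential with mean $2$, law) and hence $\int_{\C^n}|z|^2\dd\nu_n=2n$, a short rearrangement, together with the identity $\Ent_{\nu_n}(\1_{K^c})=-\nu_n(K^c)\ln\nu_n(K^c)$, shows that \eqref{eq.entropy} is equivalent to
\[
   \int_{K^c}\bigl(|z|^2-2n\bigr)\dd\nu_n(z)\ \geq\ 2\Ent_{\nu_n}(\1_{K^c}).
\]
Writing $|z|^2-2n=\sum_{i=1}^n(|z_i|^2-2)$, I would treat the contribution of each coordinate separately; the relevant structural fact is that $\1_{K^c}$ depends only on $(|z_1|,\dots,|z_n|)$ and is \emph{nondecreasing} in each of them, because $K$ is Reinhardt complete.

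The heart of the argument should be a one-dimensional inequality: for every nonnegative function $g$ on $\C$ which is a nondecreasing function of $|z|$,
\[
   \int_{\C}\bigl(|z|^2-2\bigr)g(z)\dd\nu_1(z)\ \geq\ 2\Ent_{\nu_1}(g),
\]
with equality exactly for (normalised) indicators of the sets $\{|z|>r\}$, $r\geq 0$ — that is, in the cylinder case. I expect this to be the main obstacle. My approach would be to pass to the variable $u=|z|^2$, which is exponential of mean $2$: an integration by parts rewrites the left-hand side as the integral of $u$ against $e^{-u/2}\dd g(u)$ (reading $g'$ as a nonnegative measure when $g$ is merely monotone), reducing the claim to a clean comparison between that integral and $\Ent_{\nu_1}(g)$. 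To conclude, I would use that the functional $g\mapsto\int_{\C}(|z|^2-2)g\dd\nu_1-2\Ent_{\nu_1}(g)$ is positively homogeneous and, since $g\mapsto\Ent_{\nu_1}(g)$ is convex, concave; as every nonnegative nondecreasing $g$ of prescribed mean is a mixture of the normalised indicators $\1_{\{|z|>r\}}/\nu_1(\{|z|>r\})$, an extreme-point (bathtub) argument reduces the inequality to those indicators, where it is a direct computation.

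Granting the one-dimensional inequality, the passage to $n$ dimensions would go through the exact chain rule for the entropy (iterating the elementary two-factor identity $\Ent_{\mu\otimes\lambda}(f)=\int\Ent_\lambda(f(w,\cdot))\dd\mu(w)+\Ent_\mu\bigl(\int f(\cdot,y)\dd\lambda(y)\bigr)$). Set $f_n=\1_{K^c}$ and, for $k<n$, let $f_k\colon\C^k\to[0,1]$ be obtained by integrating $f_n$ over the last $n-k$ variables; monotonicity is preserved, so with $z_1,\dots,z_{k-1}$ frozen $f_k$ is a nonnegative nondecreasing function of $|z_k|$. Then the term-by-term identity
\[
   \Ent_{\nu_n}(f_n)=\sum_{k=1}^n\int_{\C^{k-1}}\Ent_{\nu_1}\bigl(f_k(w,\cdot)\bigr)\dd\nu_{k-1}(w)
\]
holds, while integrating out the last $n-i$ coordinates gives
\[
   \int_{\C^n}\bigl(|z_i|^2-2\bigr)\1_{K^c}\dd\nu_n=\int_{\C^{i-1}}\left(\int_{\C}\bigl(|z_i|^2-2\bigr)f_i(w,\cdot)\dd\nu_1\right)\dd\nu_{i-1}(w).
\]
Applying the one-dimensional inequality to the inner integral bounds the $i$-th term from below by $2\int_{\C^{i-1}}\Ent_{\nu_1}(f_i(w,\cdot))\dd\nu_{i-1}(w)$; summing over $i=1,\dots,n$ and comparing with the chain-rule identity yields precisely $\int_{K^c}(|z|^2-2n)\dd\nu_n\geq 2\Ent_{\nu_n}(\1_{K^c})$, hence Theorem~\ref{thm.main}. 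Finally, since the unit ball of any \emph{unconditional} norm on $\C^n$ lies in $\mf R$, the unconditional convex case follows, as announced in the abstract.
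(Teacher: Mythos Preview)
Your plan is correct and yields the theorem; the reduction to $\int_{K^c}(|z|^2-2n)\dd\nu_n\geq 2\Ent_{\nu_n}(\1_{K^c})$ is exactly the paper's (with $g=\1_{K^c}$, via Proposition~\ref{prop.entopy}), but the two substeps are handled differently. For the one-dimensional inequality the paper proves the more general Lemma~\ref{lm.1dimentropyineq}: for any Borel probability $\mu$ on $\R_+$ and bounded nondecreasing $f\geq 0$, $\Ent_\mu f\leq \int f(x)\bigl(-1-\ln\mu((x,\infty))\bigr)\dd\mu(x)$, via a short inverse-tail-function argument; specialising to $\dd\mu(r)=re^{-r^2/2}\dd r$ (so $-\ln\mu((x,\infty))=x^2/2$) recovers your inequality. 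Your route---observing that $F(g)=\int(|z|^2-2)g\,\dd\nu_1-2\Ent_{\nu_1}(g)$ is concave (entropy is convex, e.g.\ by its dual representation) and positively $1$-homogeneous, hence superadditive, and that $F$ vanishes on indicators $\1_{\{|z|>r\}}$ by direct computation, so the layer-cake decomposition gives $F(g)\geq 0$---is equally valid and makes the equality case transparent, whereas the paper's lemma applies to an arbitrary base measure $\mu$. For the tensorisation, the paper freezes \emph{all} other coordinates, applies the one-dimensional bound to $g$ with those coordinates fixed, and then invokes \emph{subadditivity} of entropy; you instead apply the one-dimensional bound to the \emph{partially averaged} functions $f_i$ and match the sum against the exact \emph{chain-rule} identity for $\Ent_{\nu_n}$. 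Both devices are standard and either closes the argument.
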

We begin with a one-dimensional entropy inequality.
\begin{lm}\label{lm.1dimentropyineq}
	Let $\mu$ be a Borel probability measure on $\R_+$ and suppose $\fun{f}{\R_+}{\R_+}$ is a bounded and non-decreasing function. Then
   \begin{equation}\label{eq.1dimentropyineq}
      \Ent_\mu f \leq -\int_{\R_+} f(x)\bigg( 1 + \ln \mu\left( (x, \infty) \right)\bigg) \dd \mu(x).
   \end{equation}
\end{lm}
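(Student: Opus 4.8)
The plan is to recast the inequality as a pointwise consequence of convexity after transporting everything to Lebesgue measure on $[0,1]$. Writing $G(x) = \mu\big((x,\infty)\big)$ and expanding the definition \eqref{eq.entdef}, the assertion \eqref{eq.1dimentropyineq} is equivalent to
\[
   \int_{\R_+} f(x)\ln\!\big(e\, f(x) G(x)\big)\,\dd\mu(x) \leq \Big(\int_{\R_+} f\,\dd\mu\Big)\ln\Big(\int_{\R_+} f\,\dd\mu\Big),
\]
since $\ln f + 1 + \ln G = \ln(efG)$. Before anything else I would check the extremal case $f = \1_{(a,\infty)}$: there both sides of this reformulated inequality equal $G(a)\ln G(a)$. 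This tells me the estimate is tight precisely on tail-indicators and that monotonicity of $f$ is the structural feature to exploit.

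First I would perform the change of variables $u = G(x)$, which (for atomless $\mu$) pushes $\mu$ forward to Lebesgue measure on $[0,1]$ and turns the non-decreasing $f$ into a bounded non-increasing function $\varphi\colon[0,1]\to\R_+$ via $\varphi(G(x)) = f(x)$. Setting $m = \int_0^1\varphi\,\dd u = \int_{\R_+} f\,\dd\mu$, the inequality becomes the clean one-dimensional statement
\[
   \int_0^1 \varphi(u)\ln\!\big(e\,u\,\varphi(u)\big)\,\dd u \leq m\ln m .
\]
The case of general Borel $\mu$ (atoms, or intervals carrying no mass, so that $G$ is only non-strictly monotone) I would recover by approximating $\mu$ by atomless measures and passing to the limit, both sides being continuous under this approximation.

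Next I would linearize by a second substitution. Put $Q(u) = \int_0^u \varphi$, so $Q$ increases from $0$ to $m$ and, because $\varphi$ is non-increasing, $Q$ is concave; hence its inverse $u = u(Q)$ is convex and increasing on $[0,m]$ with $u(0)=0$. Since $\varphi(u(Q)) = 1/u'(Q)$, the integral above equals $m + \int_0^m \ln\!\big(u(Q)/u'(Q)\big)\,\dd Q$, while the target $m\ln m$ equals $m + \int_0^m \ln Q\,\dd Q$ (using $\int_0^m \ln Q\,\dd Q = m\ln m - m$). Thus everything reduces to the pointwise inequality $\ln\!\big(u(Q)/u'(Q)\big) \leq \ln Q$, i.e. $u(Q) \leq Q\,u'(Q)$, which is exactly the chord-below-tangent property of a convex function vanishing at the origin: $u(Q) = \int_0^Q u'(t)\,\dd t \leq Q\,u'(Q)$ because $u'$ is non-decreasing.

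I expect the genuine inequality to evaporate into this one-line convexity fact, so the real work will be the bookkeeping in the two transports. The main obstacle is making the changes of variables rigorous for an arbitrary Borel measure $\mu$: handling atoms of $\mu$ and flat pieces of $G$, justifying that $u(Q)$ is absolutely continuous with the stated a.e.\ derivative (true since a convex function is locally Lipschitz in the interior), and controlling boundary terms as $u\to 0$ and $u\to 1$ (here boundedness of $f$, hence of $\varphi$, guarantees the relevant limits vanish). As a fallback for the second substitution I would use that every non-increasing density is a mixture of uniform densities $\tfrac1a\1_{[0,a]}$ and then apply Jensen to the convex function $t\mapsto t\ln t$, reducing directly to the equality case on uniforms.
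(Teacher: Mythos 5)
Your core argument is correct, and it takes a genuinely different route from the paper's. The paper normalizes $\int f\,\dd\mu = 1$, uses monotonicity of $f$ once, pointwise, to bound $f(x)\,\mu\left((x,\infty)\right) \leq \nu\left((x,\infty)\right)$ where $\dd\nu = f\,\dd\mu$, and then proves the universal fact $\int \ln \nu\left((x,\infty)\right)\dd\nu(x) \leq -1$ by a Fubini argument built on the inverse tail function $H(y)=\inf\{t \mid \nu\left((t,\infty)\right)\leq y\}$; that route handles an arbitrary Borel $\mu$, atoms and all, in one stroke. You instead transport twice --- first by the tail function $G$ onto Lebesgue measure on $[0,1]$, then by the mass variable $Q=\int_0^u\varphi$ --- and collapse the inequality to the chord-below-tangent fact $u(Q)\leq Q\,u'(Q)$ for the convex inverse $u$. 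What this buys is transparency: the inequality really does evaporate into convexity, and the equality cases (tail indicators, i.e.\ $\varphi=\1_{[0,a]}$) are visible. Your fallback is also sound and arguably the slickest argument of all: the functional $T(\varphi)=\Ent\varphi+\int_0^1\varphi(u)(1+\ln u)\,\dd u$ is $1$-homogeneous and convex (entropy is convex by joint convexity of $(a,b)\mapsto a\ln(a/b)$, the other term is linear), it vanishes on every $\1_{[0,a]}$, and the layer-cake formula writes any bounded non-increasing $\varphi$ as a positive mixture of such indicators, so Jensen finishes.

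The one place where you owe more than bookkeeping is the reduction to atomless $\mu$. ``Both sides being continuous under this approximation'' is not true as stated: if $\mu$ charges a point where $f$ jumps, smoothing $\mu$ perturbs $\int f\,\dd\mu$ discontinuously (in the limit you only recover a one-sided version of $f$), and $\ln G$ is unbounded near the top of the support, so plain weak convergence does not transfer either side without extra work. The clean repair stays entirely inside your scheme: do not approximate, couple. With the quantile function $q(u)=\inf\{t \geq 0 \mid \mu\left([0,t]\right)\geq u\}$, the map $q$ pushes Lebesgue measure on $[0,1]$ forward to $\mu$, the composition $f\circ q$ is non-decreasing, and right-continuity of the distribution function gives $G(q(u))\leq 1-u$; since $f\geq 0$, enlarging $G(q(u))$ to $1-u$ only increases the left-hand side $\int f\ln\left(e f G\right)\dd\mu$, so the general case reduces exactly to your Lebesgue statement with $\varphi(v)=f(q(1-v))$. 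This coupling is the same device as the paper's inverse tail function --- which is precisely why the paper never needs an atomless assumption --- so with that substitution your proof closes for all Borel $\mu$.
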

\begin{proof}
Using homogeneity of both sides of \eqref{eq.1dimentropyineq}, without loss of generality, we can assume that $\int_{\R_+} f \dd \mu = 1$. Then we may rewrite the assertion of the lemma as follows
\[
	\int_{\R_+} \ln \Bigg( f(x)\int_{(x, \infty)} \dd \mu(t) \Bigg) f(x) \dd \mu(x) \leq -1.
\]
Introduce the probability measure $\nu$ on $\R_+$ with the density $f$ with respect to $\mu$. Thanks to monotonicity of $f$ we might estimate the left hand side of the last inequality by
\[
	\int_{\R_+} \ln \bigg( \nu\left( (x, \infty) \right)\bigg) \dd \nu(x) = - \int_0^\infty \int_0^1 \frac{\dd u}{u} \1_{\{u \geq \nu\left( (x, \infty) \right)\}}(u, x) \dd \nu(x).
\]
Define the function
\[
	H(y) := \inf \left\{ t \ | \ \nu\left( (t, \infty) \right) \leq y \right\},
\]
which is the \emph{inverse} tail function, and observe that
\[
	\{(u, x) \ | \ u \geq \nu\left( (x, \infty) \right)\} \supset \{(u, x) \ | \ H(u) \leq x\},
\]
as $u \geq \nu\left( (H(u), \infty) \right) \geq T(x)$. This leads to
\begin{align*}
   - \int_0^\infty \int_0^1 \frac{\dd u}{u} \1_{\{u \geq \nu\left( (x, \infty) \right)\}}(u, x) \dd \nu(x) &\leq - \int_0^\infty \int_0^1 \frac{\dd u}{u} \1_{\{H(u) \leq x\}}(u, x) \dd \nu(x) \\
   &= - \int_0^1 \nu\left( [H(u), \infty) \right) \frac{\dd u}{u}.
\end{align*}
Since $u \leq \nu\left( [H(u), \infty) \right)$, we finally get the desired estimation.
\end{proof}
Now, for a certain class of functions, we establish the multidimensional version of inequality \eqref{eq.1dimentropyineq}. For the simplicity, we formulate this result for the Gaussian measure.
\begin{lm}\label{lm.multidimentropyineq}
   Let $\fun{g}{\C^n}{\R_+}$ be a bounded function satisfying
   \begin{enumerate}[1)]
      \item\label{rotsym} $g((e^{i\theta_1}z_1, \ldots, e^{i\theta_n} z_n)) = g(z)$ for any $z \in \C^n$ and $\theta_1, \ldots, \theta_n \in \R$,
      \item\label{monot} for any $w, z \in \C^n$ the condition $|w_k| \leq |z_k|$, $k = 1, \ldots, n$ implies $g(w) \leq g(z)$.
   \end{enumerate}
   Then
   \begin{equation}\label{eq.multidimentropyineq}
      \Ent_{\nu_n} g \leq \int_{\C^n} g(z)\left( \frac{|z|^2}{2} - n \right) \dd \nu_n(z).
   \end{equation}
\end{lm}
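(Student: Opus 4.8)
The plan is to combine the tensorization (subadditivity) property of entropy with the one-dimensional estimate from Lemma~\ref{lm.1dimentropyineq}. Since $\nu_n$ is the $n$-fold product $\nu_1 \otimes \cdots \otimes \nu_1$, the subadditivity of entropy yields
\[
   \Ent_{\nu_n} g \leq \sum_{k=1}^n \int_{\C^n} \Ent_{\nu_1}^{(k)} g \dd\nu_n,
\]
where $\Ent_{\nu_1}^{(k)} g$ denotes the entropy of $g$ computed in the $k$-th complex variable with the remaining $n-1$ coordinates frozen (so it is a function of those coordinates, which we then integrate). It thus suffices to bound each of these inner entropies.

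Fix $k$ and a value of the coordinates $z_j$, $j \neq k$. By the first hypothesis the map $z_k \mapsto g(z)$ depends on $z_k$ only through $|z_k|$, hence equals $f(|z_k|)$ for some bounded function $\fun{f}{\R_+}{\R_+}$, which by the second hypothesis is non-decreasing. Passing to polar coordinates, the push-forward of $\nu_1$ under $\C \ni w \mapsto |w|$ is the measure $\mu$ on $\R_+$ with density $r \mapsto r e^{-r^2/2}$; its tail equals $\mu\left( (x, \infty) \right) = e^{-x^2/2}$, so $\ln \mu\left( (x,\infty) \right) = -x^2/2$. Applying Lemma~\ref{lm.1dimentropyineq} to $f$ and $\mu$ gives
\[
   \Ent_{\nu_1}^{(k)} g = \Ent_\mu f \leq -\int_{\R_+} f(x)\left( 1 - \frac{x^2}{2} \right) \dd\mu(x) = \int_\C g(z)\left( \frac{|z_k|^2}{2} - 1 \right) \dd\nu_1(z_k).
\]

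Integrating this over the remaining coordinates and summing over $k = 1, \ldots, n$, while using $|z_1|^2 + \cdots + |z_n|^2 = |z|^2$, we obtain
\[
   \Ent_{\nu_n} g \leq \sum_{k=1}^n \int_{\C^n} g(z)\left( \frac{|z_k|^2}{2} - 1 \right) \dd\nu_n(z) = \int_{\C^n} g(z)\left( \frac{|z|^2}{2} - n \right) \dd\nu_n(z),
\]
which is exactly \eqref{eq.multidimentropyineq}. The substantive ingredient here is Lemma~\ref{lm.1dimentropyineq}; the remaining pieces --- the standard subadditivity of entropy for product measures and the elementary computation of the tail of $\mu$ in polar coordinates --- are routine. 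The one point to be careful about is that $f$ genuinely depends on the frozen coordinates, so for the reduction to one dimension one should first invoke Fubini's theorem to write the relevant integrals as iterated ones before applying the one-dimensional lemma; boundedness and measurability of $g$ make this immediate.
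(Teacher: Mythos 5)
Your proposal is correct and follows essentially the same route as the paper: subadditivity (tensorization) of entropy over the product structure of $\nu_n$, reduction of each one-coordinate entropy to the radial measure $\mu$ with density $re^{-r^2/2}$ via rotational invariance, and then Lemma~\ref{lm.1dimentropyineq} together with the computation $\mu\left( (x,\infty) \right) = e^{-x^2/2}$. The only difference is cosmetic --- you start from $\Ent_{\nu_n} g$ and apply subadditivity first, whereas the paper runs the same chain of inequalities in the reverse direction starting from the right-hand side.
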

\begin{proof}
One piece of notation: for a fixed vector $r = (r_1, \ldots, r_n) \in (\R_+)^n$ we denote $r^k = (r_1, \ldots, r_{k-1}, r_{k+1}, \ldots, r_n) \in (\R_+)^{n-1}$, and then define the functions  
\[
	g_k^{r^k}(x) = g(r_1, \ldots, r_{k-1}, x, r_{k+1}, \ldots, r_n), \qquad k = 1, \ldots, n.
\]

Notice that for a function $\fun{h}{\C}{\R_+}$ obeying the property \ref{rotsym}) we get
\[
	\int_\C h(z)\dd \nu_1(z) = \frac{1}{2\pi}\int_0^{2\pi} \int_0^\infty h(re^{i\theta})e^{-r^2/2}r\dd r \dd \theta = \int_0^\infty h(r) \dd \mu(r),
\]
where $\mu$ denotes the probability measure on $\R_+$ with the density at $r$ given by $re^{-r^2/2}$. Therefore
\begin{align*}
   \int_{\C^n} g(z)\left( \frac{|z|^2}{2} - n \right) \dd \nu_n(z) &= \int_{(\R_+)^n} g(r) \left( \frac{\sum_{k=1}^n r_k^2}{2} - n \right) \dd \mu^{\otimes n}(r) \\
   &= \int_{(\R_+)^n} \sum_{k=1}^n \Bigg[\int_{\R_+} g_j^{r^j}(x) \left( \frac{x^2}{2} - 1 \right) \dd \mu(x)\Bigg]  \dd \mu^{\otimes n}(r).
\end{align*}
Applying  Lemma \ref{lm.1dimentropyineq} for the function $g_j^{r^j}$ and the measure $\mu$ we obtain the estimation
\begin{align*}
   \int_{\C^n} g(z)\left( \frac{|z|^2}{2} - n \right) \dd \nu_n(z) &\geq \int_{(\R_+)^n} \sum_{k=1}^n  \Ent_{\mu} g_j^{r^j} \dd \mu^{\otimes n}(r) \\ 
   &\geq \Ent_{\mu^{\otimes n}} g = \Ent_{\nu_n} g,
\end{align*}
where the last inequality follows from subadditivity of entropy (for example see \cite[Proposition 5.6]{Led}).
\end{proof}

\begin{proof}[Proof of Theorem \ref{thm.main}]
Fix $K \in \mf{R}$. In order to show \eqref{eq.entropy} we introduce the function $g(z) = 1 - \1_K(z)$. We adopt the standard convention that $0\ln 0 = 0$, hence the desired inequality is equivalent to \eqref{eq.multidimentropyineq}. Thus the application of Lemma \ref{lm.multidimentropyineq} for the function $g$ finishes the proof.
\end{proof}

Theorem \ref{thm.main} immediately implies that the Cartesian products of cylinders support $S\C$-inequality. As a consequence, $S\C$-inequality possesses a tensorization property.
\begin{cor}\label{cor.product}
   Assume sets $K_1 \subset \C^{n_1}, \ldots, K_\ell \subset \C^{n_\ell}$ support $S\C$-inequality. Then the set $K_1 \times \ldots \times K_\ell$ also supports $S\C$-inequality.
\end{cor}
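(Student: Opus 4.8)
The plan is to use the characterization of $S\C$-inequality provided by Proposition \ref{prop.entopy}, i.e. to verify the entropy-type inequality \eqref{eq.entropy} for $K = K_1 \times \ldots \times K_\ell$ given that each factor satisfies its own such inequality. By induction it suffices to treat $\ell = 2$; write $K = K_1 \times K_2$ with $K_1 \subset \C^{n_1}$, $K_2 \subset \C^{n_2}$, and $n = n_1 + n_2$. Since $\nu_n = \nu_{n_1} \otimes \nu_{n_2}$ under the obvious splitting of coordinates, we have $\nu_n(K) = \nu_{n_1}(K_1)\nu_{n_2}(K_2)$, and writing $z = (z', z'')$ with $z' \in \C^{n_1}$, $z'' \in \C^{n_2}$, the integral of $|z|^2 = |z'|^2 + |z''|^2$ over $K$ splits by Fubini as
\[
	\int_K |z|^2 \dd \nu_n = \nu_{n_2}(K_2)\int_{K_1}|z'|^2 \dd \nu_{n_1}(z') + \nu_{n_1}(K_1)\int_{K_2}|z''|^2\dd\nu_{n_2}(z'').
\]

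Next I would invoke the hypothesis through Proposition \ref{prop.entopy} applied to $K_1$ and to $K_2$ separately: setting $a = \nu_{n_1}(K_1)$, $b = \nu_{n_2}(K_2)$, we get $\int_{K_1}|z'|^2\dd\nu_{n_1} \leq 2n_1 a + 2(1-a)\ln(1-a)$ and similarly for $K_2$. Plugging these into the Fubini decomposition, it remains to prove the purely numerical inequality
\[
	b\big[2n_1 a + 2(1-a)\ln(1-a)\big] + a\big[2n_2 b + 2(1-b)\ln(1-b)\big] \leq 2n ab + 2(1-ab)\ln(1-ab),
\]
for $a, b \in [0,1]$; after cancelling the $2nab$ terms (using $n = n_1+n_2$) this reduces to
\[
	b(1-a)\ln(1-a) + a(1-b)\ln(1-b) \leq (1-ab)\ln(1-ab).
\]

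So the whole corollary hinges on the elementary two-variable inequality displayed above, and that is where the only real work lies. I would prove it by fixing $b$ and studying $\varphi(a) := (1-ab)\ln(1-ab) - b(1-a)\ln(1-a) - a(1-b)\ln(1-b)$ on $[0,1]$, checking $\varphi(0) = 0$ and showing $\varphi'(a) \geq 0$; differentiating gives $\varphi'(a) = -b\ln(1-ab) + b\ln(1-a) + b - (1-b)\ln(1-b)$, i.e. $\varphi'(a) = b\ln\frac{1-a}{1-ab} + b - (1-b)\ln(1-b)$. Since $1-ab \geq 1-a$ the logarithm is $\leq 0$, but $b + b\ln\frac{1-a}{1-ab}$ together with $-(1-b)\ln(1-b) \geq 0$ needs a short argument; alternatively one checks $\varphi'(a)$ is monotone in $a$ and evaluates at the endpoints $a = 0$ (where $\varphi'(0) = b - (1-b)\ln(1-b) \geq 0$) and $a = 1$ (a limit computation), or simply observes that the function $u \mapsto u\ln u$ applied to $1-ab, 1-a, 1-b$ together with convexity and the relation $(1-ab) = b(1-a) + (1-b)$ (a convex combination with weights $b$ and $1-b$ of the points $1-a$ and $1$, noting $1\cdot\ln 1 = 0$) yields the claim directly from Jensen's inequality. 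Indeed $(1-ab)\ln(1-ab) \leq b(1-a)\ln(1-a) + (1-b)\cdot 1\cdot\ln 1 = b(1-a)\ln(1-a)$ is false in that direction, so one must be careful about which convex combination to use; the correct route is to write $1 - ab$ as a convex combination in two different ways and combine, or to carry out the single-variable monotonicity check above, which is routine once set up. Having established this numerical fact, the corollary follows immediately; no genuine geometric obstacle remains beyond it.
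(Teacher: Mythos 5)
Your route differs from the paper's. The paper's argument for this corollary is soft and avoids computation entirely: for each $i$ choose a cylinder $C_i \subset \C^{n_i}$ with $\nu_{n_i}(C_i) = \nu_{n_i}(K_i)$; the hypothesis gives $\nu_{n_i}(tK_i) \ge \nu_{n_i}(tC_i)$ for $t \ge 1$, so the product structure of $\nu_n$ yields $\nu_n(tK) \ge \nu_n(t(C_1 \times \ldots \times C_\ell))$, and since a Cartesian product of cylinders belongs to $\mf{R}$, Theorem \ref{thm.main} applies to it and closes the chain against the single cylinder $C$ of the same measure. Your reduction, via Proposition \ref{prop.entopy} (which is indeed stated as an equivalence, so you may use it in both directions) and Fubini, to the two-variable inequality
\[
b(1-a)\ln(1-a) + a(1-b)\ln(1-b) \le (1-ab)\ln(1-ab), \qquad a, b \in [0,1],
\]
is correct, and the inequality is true, so your approach can be completed; it simply costs an extra elementary lemma that the paper's argument does not need.

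However, as written, your treatment of that inequality has a genuine gap. The claim that $\varphi'(a) \ge 0$ on $[0,1]$ is false: the correct derivative is $\varphi'(a) = b\ln\frac{1-a}{1-ab} - (1-b)\ln(1-b)$ (your expression carries a spurious $+b$), and it tends to $-\infty$ as $a \to 1^-$ whenever $0<b<1$, so $\varphi$ is not monotone. The Jensen argument you sketch produces an upper bound on $(1-ab)\ln(1-ab)$ where a lower bound is needed, as you yourself notice, and ``write $1-ab$ as a convex combination in two different ways and combine'' is not an argument. The missing ingredient is the second endpoint: with the convention $0\ln 0 = 0$ one has $\varphi(1) = (1-b)\ln(1-b) - 0 - (1-b)\ln(1-b) = 0$, which you never check and without which no first-derivative information at $a=0$ can give nonnegativity on all of $[0,1]$. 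The clean finish is:
\[
\varphi''(a) = b\left(\frac{b}{1-ab} - \frac{1}{1-a}\right) = \frac{b(b-1)}{(1-a)(1-ab)} \le 0,
\]
so $\varphi$ is concave on $[0,1]$ and vanishes at both endpoints, hence $\varphi \ge 0$ there. With this replacement the numerical inequality, and therefore the corollary, follows.
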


\section*{Acknowledgements}

We would like to thank R. Adamczak for his remark regarding Lemma \ref{lm.1dimentropyineq}, which led to the present general formulation.

The work was done while the second named author was participating in The Kupcinet-Getz International Summer Science School at the Weizmann Institute of Science in Rehovot, Israel.

\appendix

\section{$S$-inequality for the exponential measure in the unconditional case}

Let $\lambda$ be the symmetric exponential measure on $\R$, i.e.
\[
	\dd \lambda (x) = \frac{1}{2}e^{-|x|}\dd x, \qquad x \in \R,
\]
and let $\lambda_n = \lambda \otimes \ldots \otimes \lambda$ be the standard exponential measure on $\R^n$, i.e.
\[
	\dd \lambda (x) = \frac{1}{2^n}e^{-|x|_1} \dd x, \qquad x \in \R^n,
\]
where we denote $|(x_1, \ldots, x_n)|_1 = \sum_{i=1}^n |x_i|$.
It has been recently noticed that the technique of the paper applies also to the $S$-inequality for the measure $\lambda_n$.  The result reads as follows
\begin{thm}\label{thm.mainexp}
	For any closed convex subset $K \subset \R^n$ which is unconditional, i.e. $(\epsilon_1 x_1, \ldots, \epsilon_n x_n) \in K$ whenever $(x_1, \ldots, x_n) \in K$ and $\epsilon_1, \ldots, \epsilon_n \in \{-1, 1\}$, and for any strip $P = \{x \in \R^n \ | \ |x_1| \leq p\}$, $p \geq 0$, we have
	\begin{equation}\label{eq.sineqexp}
		\lambda_n(K) = \lambda_n(P) \quad \Longrightarrow \quad \forall t \geq 1 \ \lambda_n(tK) \geq \lambda_n(tP),
	\end{equation}
	and, equivalently,
	\begin{equation}\label{eq.sineqexp'}
		\lambda_n(K) = \lambda_n(P) \quad \Longrightarrow \quad \forall t \leq 1 \ \lambda_n(tK) \leq \lambda_n(tP).
	\end{equation}
\end{thm}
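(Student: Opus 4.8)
The plan is to rerun, essentially word for word, the argument of Sections~\ref{sec.pre} and~\ref{sec.res}, with $\nu_n$ replaced by $\lambda_n$ and the weight $|z|^2/2$ replaced by $|x|_1$. First I would record the exponential analogues of Propositions~\ref{prop.derivative} and~\ref{prop.entopy}. By the argument of \cite[Lemma 1]{KS} it is enough to prove that $\lambda_n(K) = \lambda_n(P)$ implies $\frac{\dd}{\dd t}\lambda_n(tK)\big|_{t=1} \geq \frac{\dd}{\dd t}\lambda_n(tP)\big|_{t=1}$, and a change of variables yields
\[
   \frac{\dd}{\dd t}\lambda_n(tA)\bigg|_{t=1} = n\lambda_n(A) - \int_A |x|_1 \,\dd \lambda_n(x)
\]
for every closed $A \subset \R^n$. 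For a strip $P = \{x\in\R^n \mid |x_1| \leq p\}$ one computes $\lambda_n(P) = 1 - e^{-p}$ and, using $\int_0^p x e^{-x}\,\dd x = 1 - (1+p)e^{-p}$ for the first coordinate and $\int_\R |x_k|\,\dd\lambda = 1$ for the remaining ones, $\int_P |x|_1\,\dd\lambda_n = n\lambda_n(P) - p\,(1-\lambda_n(P))$. Substituting $p = -\ln(1-\lambda_n(P))$ turns the derivative inequality into the exponential counterpart of \eqref{eq.entropy}:
\[
   \int_K |x|_1 \,\dd \lambda_n(x) \leq n\lambda_n(K) + \left(1 - \lambda_n(K)\right)\ln\left(1 - \lambda_n(K)\right).
\]

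Just as in the proof of Theorem~\ref{thm.main}, I would then set $g = 1 - \1_K$. Because $\int_{\R^n}|x|_1\,\dd\lambda_n = n$ and $g\ln g \equiv 0$, the displayed inequality is equivalent to the exponential version of \eqref{eq.multidimentropyineq}, namely $\Ent_{\lambda_n} g \leq \int_{\R^n} g(x)\left(|x|_1 - n\right)\dd\lambda_n(x)$. It thus remains to establish this inequality for every bounded $g\colon \R^n \to \R_+$ which is \emph{unconditional}, $g(\epsilon_1 x_1, \ldots, \epsilon_n x_n) = g(x)$ for all $\epsilon_k \in \{-1,1\}$, and \emph{coordinatewise non-decreasing}, $|w_k| \leq |z_k|$ for every $k$ implying $g(w) \leq g(z)$. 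The function $1 - \1_K$ enjoys both properties: an unconditional convex body $K$ contains, along with $z$, every $w$ with $|w_k| \leq |z_k|$, since such a $w$ lies in the box $\prod_k [-|z_k|, |z_k|]$, which is the convex hull of the $2^n$ sign flips of $z$, all of them in $K$.

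For the entropy inequality I would tensorize exactly as in Lemma~\ref{lm.multidimentropyineq}. Writing $|x|_1 - n = \sum_{k=1}^n (|x_k| - 1)$ and freezing all coordinates but the $k$-th, it suffices, by subadditivity of entropy (\cite[Proposition 5.6]{Led}), to prove the one-dimensional bound $\Ent_\lambda h \leq \int_\R h(x)\,(|x| - 1)\,\dd\lambda(x)$ for every bounded even $h$ non-decreasing in $|x|$. Restricting to $\R_+$, where $\lambda$ becomes the standard exponential law $\sigma$ with $\dd\sigma(x) = e^{-x}\dd x$, this reads $\Ent_\sigma h \leq \int_{\R_+} h(x)\,(x - 1)\,\dd\sigma(x)$; and since $\sigma\left((x, \infty)\right) = e^{-x}$, we have $-\left(1 + \ln\sigma\left((x,\infty)\right)\right) = x - 1$, so this is nothing but Lemma~\ref{lm.1dimentropyineq} applied with $\mu = \sigma$ and $f = h$. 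Finally, the equivalence of \eqref{eq.sineqexp} and \eqref{eq.sineqexp'} is a short reparametrization: applying \eqref{eq.sineqexp} to the dilate $tK$ (again unconditional convex) with dilation factor $1/t \geq 1$ and using that strips are totally ordered by inclusion with $\lambda_n$ strictly increasing along them, one obtains $\lambda_n(tK) \leq \lambda_n(tP)$ for $t \leq 1$, and conversely.

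I do not anticipate a genuine obstacle: the scheme transfers precisely because $\ln\sigma\left((x,\infty)\right) = -x$ matches the weight $|x|_1$ that appears in the derivative of $t \mapsto \lambda_n(tK)$, exactly as $\ln\mu\left((r,\infty)\right) = -r^2/2$ matches $|z|^2/2$ in the complex Gaussian case. The only two points that call for a couple of lines of honest computation are the closed form of $\int_P |x|_1\,\dd\lambda_n$ in terms of $\lambda_n(P)$ and the reduction to $t = 1$, which amounts to the elementary fact that an increasing $F$ with $s\,F'(s) \geq -(1-F(s))\ln(1-F(s))$ dominates the strip profile $G$ solving $s\,G'(s) = -(1-G(s))\ln(1-G(s))$ once $F(1) = G(1)$, as in \cite[Lemma 1]{KS}.
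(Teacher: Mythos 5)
Your proposal is correct and follows essentially the same route as the paper: reduction to the derivative at $t=1$ via \cite[Lemma 1]{KS}, reformulation as the entropy bound $\Ent_{\lambda_n} g \leq \int g(x)(|x|_1-n)\,\dd\lambda_n(x)$ for $g = \1_{\R^n\setminus K}$, and tensorization of Lemma \ref{lm.1dimentropyineq} applied to the one-sided exponential measure, for which $-\bigl(1+\ln\sigma((x,\infty))\bigr)=x-1$. You additionally spell out two details the paper leaves implicit (the convex-hull-of-sign-flips argument for coordinatewise monotonicity of $\1_{\R^n\setminus K}$, and a direct rather than by-contradiction derivation of the equivalence of \eqref{eq.sineqexp} and \eqref{eq.sineqexp'}), but the substance is identical.
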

\begin{proof}
The equivalence between \eqref{eq.sineqexp} and \eqref{eq.sineqexp'} is straightforward. For instance, assume the latter does not hold. Then, there is $t_0 < 1$ such that $\lambda_n(t_0K) > \lambda_n(t_0P)$. So, we can find $s_0 < 1$ for which $\lambda_n(s_0t_0K) = \lambda_n(s_0t_0P)$. Using \eqref{eq.sineqexp} we get a contradiction
\[
	\lambda_n(K) > \lambda_n(s_0K) = \lambda_n\left( \frac{1}{t_0}\left( s_0t_0K \right) \right) \geq \lambda_n\left( \frac{1}{t_0}\left( t_0P \right) \right) = \lambda_n(P) = \lambda_n(K).
\]

We essentially follow the proof of Theorem \ref{thm.main}. Hence, first of all, as in Proposition \ref{prop.derivative}, we notice that we might equivalently prove that
\begin{equation}\label{eq.sineqexpviaderiv}
		\lambda_n(K) = \lambda_n(P) \quad \Longrightarrow \quad \frac{\dd}{\dd t}\lambda_n(tK)\bigg|_{t=1} \geq \frac{\dd}{\dd t}\lambda_n(tC)\bigg|_{t=1}.
\end{equation}
Again, a trivial computation shows that
\begin{equation}\label{eq.derivat1exp}
	\frac{\dd}{\dd t}\lambda_n(tK)\bigg|_{t=1} = n\lambda_n(K) - \int_K |x|_1 \dd \lambda_n(x).
\end{equation}
Thus, we would like to prove that
\begin{equation}\label{eq.sineqexpviamoments}
	\lambda_n(K) = \lambda_n(P) \quad \Longrightarrow \quad \int_K |x|_1 \dd \lambda_n(x) \leq \int_P |x|_1 \dd \lambda_n(x).
\end{equation}
One another easy computation yields
\[
	\int_P |x|_1 \dd \lambda_n(x) = n(1 - e^{-p}) - pe^{-p}.
\]
But $\lambda_n(K) = \lambda_n(P) = 1 - e^{-p}$, so we get
\[
	\int_P |x|_1 \dd \lambda_n(x) = n - n\lambda_n(K') + \lambda_n(K')\ln \lambda_n(K'),
\]
where $K' = \R^n \setminus K$. Since $\int_{\R^n} |x|_1 \dd \lambda_n(x) = n$, \eqref{eq.sineqexpviamoments} is equivalent to
\[
	-\lambda_n(K')\ln \lambda_n(K') \leq \int_{K'} (|x|_1 - n) \dd \lambda_n(x).
\]
Now we introduce the function $g(x) = \1_{K'}(x)$. Then the above inequality may be rewritten to
\begin{equation}\label{eq.sineqexpviag}
	\Ent_{\lambda_n} g \leq \int_{\R^n} g(x)(|x|_1 - n) \dd \lambda_n(x).
\end{equation}
Thanks to unconditionality of $K$ the function $g$ is even with respect to each coordinate. Using in addition convexity, we check that it is nondecreasing with respect to each coordinate. These properties as well as certain one-dimensional inequality, which is deduced from Lemma \ref{lm.1dimentropyineq} for the measure with the density $e^{-x}$ on $\R_+$, allow us to prove inequality \eqref{eq.sineqexpviag} in the same manner as in Lemma \ref{lm.multidimentropyineq}.
\end{proof}

Following the method of \cite[Corollary 3]{LO} we obtain the result concerning the comparison of moments.
\begin{cor}\label{cor.expmoments}
	Let $\|\cdot\|$ be a norm on $\R^n$ which is unconditional, i.e.
	\[
		|(\epsilon_1 x_1, \ldots, \epsilon_n x_n)\| = \|(x_1, \ldots, x_n)\|,
	\]
	for any $x_j \in \R$ and $\epsilon_j \in \{-1, 1\}$. Then for $p \geq q > 0$
	\begin{equation}\label{eq.expmoments}
		\left( \int_{\R^n} \|x\|^p \dd \lambda_n(x) \right)^{1/p} \leq C_{p,q} \left( \int_{\R^n} \|x\| ^q \dd \lambda_n(x) \right)^{1/q},
	\end{equation}
	where the constant $C_{p,q} = \left( \int_\R |x|^p \dd \lambda(x) \right)^{1/p} / \left( \int_\R |x|^q \dd \lambda(x) \right)^{1/q}$ is the best possible.
\end{cor}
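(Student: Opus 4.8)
The plan is to mirror the proof of \cite[Corollary 3]{LO}, with Theorem \ref{thm.mainexp} taking the place of the Gaussian $S$-inequality. Both sides of \eqref{eq.expmoments} are $1$-homogeneous in $\|\cdot\|$ and $C_{p,q}$ is unchanged under scaling, so the problem is scale-free. Write $K$ for the unit ball of $\|\cdot\|$, and let $W=\|x\|$ with $x$ of law $\lambda_n$, so that $\mathbb{P}(W\le t)=\lambda_n(tK)$; all moments of $W$ are finite because $\|x\|\le c|x|_1$ for some $c$ and $|x|_1$ has every polynomial moment under $\lambda_n$.

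First I would record, for every dilate $sK$, a one‑dimensional single‑crossing comparison. Fix $s>0$. Since $sK$ is closed, convex and unconditional, there is a strip $P_s=\{x:|x_1|\le r_s\}$ with $\lambda_n(sK)=\lambda_n(P_s)$, namely $r_s=-\ln(1-\lambda_n(sK))$; as $s$ ranges over $(0,\infty)$ so does $r_s$ (because $0$ is interior to $K$ and $K$ is bounded). By Theorem \ref{thm.mainexp}, $\lambda_n(t\,sK)\ge\lambda_n(tP_s)$ for $t\ge1$ and $\lambda_n(t\,sK)\le\lambda_n(tP_s)$ for $t\le1$. As $\lambda_n(tP_s)=\lambda(\{|y|\le tr_s\})=\mathbb{P}(V_s\le t)$ with $V_s:=|y|/r_s$ and $y$ of law $\lambda$, this says the distribution functions of $W/s$ and $V_s$ agree at $t=1$, with $\mathbb{P}(W/s\le t)\le\mathbb{P}(V_s\le t)$ for $t\le1$ and the reverse for $t\ge1$: thus $W/s$ is more concentrated around $1$ than the extremal one‑dimensional variable $V_s$.

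Next I would turn this into the moment bound. For each such pair, integration by parts applied to $\varphi(w)=w^{p}-\frac{p}{q}w^{q}$ — which vanishes at $0$, decreases on $(0,1)$ and increases on $(1,\infty)$ — gives $\mathbb{E}\,\varphi(W/s)\le\mathbb{E}\,\varphi(V_s)$; in terms of $u=1/s$, $v=1/r_s$ and $a=\mathbb{E}W^{p}$, $b=\mathbb{E}W^{q}$, $A=\mathbb{E}|y|^{p}$, $B=\mathbb{E}|y|^{q}$ this reads
\[
   a\,u^{p}-\tfrac{p}{q}\,b\,u^{q}\ \le\ A\,v^{p}-\tfrac{p}{q}\,B\,v^{q},
\]
valid for every $s>0$. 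The left side, as a function of $u>0$, attains its minimum at $u_*=(b/a)^{1/(p-q)}$, equal to $(1-\tfrac pq)\,b^{p/(p-q)}a^{-q/(p-q)}$, and likewise the right side at $v_*=(B/A)^{1/(p-q)}$. Choosing $s$ so that $r_s=1/v_*$ (possible by the surjectivity noted above) and using that the left side is bounded below by its minimum, the displayed inequality forces $(1-\tfrac pq)\,b^{p/(p-q)}a^{-q/(p-q)}\le(1-\tfrac pq)\,B^{p/(p-q)}A^{-q/(p-q)}$; dividing by $1-\tfrac pq<0$ (which reverses the inequality) and simplifying yields $b^{p}A^{q}\ge a^{q}B^{p}$, i.e. $a^{1/p}/b^{1/q}\le A^{1/p}/B^{1/q}$, which is exactly \eqref{eq.expmoments}.

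The routine ingredients are the reduction of the $S$-inequality to the single‑crossing statement and the integration by parts; the one place that needs a little care is the global organization — one must use the whole one‑parameter family (one comparison per dilate of $K$) and a single optimization over the dilation parameter in order to land on the scale‑free ratio. Finally, $C_{p,q}$ is optimal: for $\|\cdot\|$ equal to (or approximating) $x\mapsto|x_1|$ one has $\int_{\R^{n}}|x_1|^{r}\,\dd\lambda_n=\int_{\R}|x|^{r}\,\dd\lambda$ by Fubini, so the left‑hand ratio in \eqref{eq.expmoments} equals $C_{p,q}$ in the limit and no smaller constant is admissible.
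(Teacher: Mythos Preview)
Your proof is correct and follows exactly the route the paper indicates: it is the argument of \cite[Corollary 3]{LO} with Theorem \ref{thm.mainexp} substituted for the Gaussian $S$-inequality, and the paper's own proof says nothing more than this. Your write-up simply makes the details explicit (the single-crossing reformulation, the choice $\varphi(w)=w^{p}-\tfrac{p}{q}w^{q}$, and the optimization over the dilation parameter), and handles the optimality of $C_{p,q}$ via approximation since $|x_1|$ is only a seminorm for $n\ge 2$.
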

\begin{proof}
	The proof hinges on the fact that a ball $K = \{x \in \R^n \ \|x\| \leq t\}$ with respect to the norm $\|\cdot\|$ is a closed convex unconditional set, so that Theorem \ref{thm.mainexp} can be applied.
\end{proof}

\begin{remark}
	It has been tempting to see how the proof might have worked for product measures with the density $C_\rho^n e^{-\sum_{i=1}^n \rho(|x_i|)}$, where $\rho$ is, e.g., positive convex and increasing function on $(0, \infty)$ and $\rho(0) = 0$. The only problem is that this in not entropy but another functional, constructed out of $\rho$, that appears in \eqref{eq.sineqexpviag}. In the case of functions $\rho(x) = x^p$, $p > 1$, we did check that Corollary 3 of \cite{LO2} (consult there Definition 4 as well) does not give that such functionals possess desired properties such as subadditivity. From this point of view the exponential measure seems to be quite exceptional.
\end{remark}

\noindent Piotr Nayar \\
\noindent Institute of Mathematics, University of Warsaw, \\
\noindent Banacha 2, \\
\noindent 02-097 Warszawa, Poland. \\
\noindent \texttt{nayar@mimuw.edu.pl}

\vspace{1em}

\noindent Tomasz Tkocz \\
\noindent Institute of Mathematics, University of Warsaw, \\
\noindent Banacha 2, \\
\noindent 02-097 Warszawa, Poland. \\
\noindent \texttt{tkocz@mimuw.edu.pl}

\end{document}